\documentclass[12pt]{arxivclassfile}




\def\R{{\mathbb R}}
\def\N{{\mathbb N}}
\def\d{{\rm d}}
\def\e{{\rm e}}

\newcommand{\ft}{\tilde{f}}

\newcommand{\A}{\alpha}
\newcommand{\B}{\beta}

\newcommand{\xhat}{{\hat{x}}}

\newcommand{\be}{\begin{equation}}
\newcommand{\bi}{\begin{itemize}}
\newcommand{\ei}{\end{itemize}}
\newcommand{\ee}{\end{equation}}
\newcommand{\ba}{\begin{eqnarray}}
\newcommand{\bs}{\begin{eqnarray*}}
\newcommand{\ea}{\end{eqnarray}}
\newcommand{\es}{\end{eqnarray*}}
\newcommand{\Om}{\Omega}

\newcommand{\Z}{\mathbb Z}



\usepackage{graphicx,epsfig,psfrag}

\usepackage{mathscinet}
\usepackage{url}
\usepackage{amssymb}
\usepackage{amsmath}
\usepackage{amsthm}
\usepackage{enumerate}
\newtheorem{theorem}{Theorem}[section]

\newtheorem{corollary}{Corollary}[section]
\newtheorem{lemma}{Lemma}[section]
\newtheorem{proposition}{Proposition}[section]

\newtheorem{definition}{Definition}[section]







\def\dist{{\rm dist}}

\begin{document}

\begin{frontmatter}


\author[RL]{R.\ Laister\corref{thing}}
\ead{Robert.Laister@uwe.ac.uk}
\address[RL]{Department of Engineering Design and Mathematics, \\ University of the West of England, Bristol BS16 1QY, UK.}
\author[warwick]{J.C.\ Robinson}
\ead{J.C.Robinson@warwick.ac.uk}
\author[warwick]{M.\ Sier{\.z}\polhk{e}ga}
\ead{M.L.Sierzega@warwick.ac.uk}
\address[warwick]{Mathematics Institute, Zeeman Building,\\ University of Warwick, Coventry CV4 7AL, UK.}
\cortext[thing]{Corresponding author}

\title{Gaussian lower bounds on the Dirichlet heat kernel and non-existence of local solutions for semilinear heat equations of Osgood type}



\begin{abstract}
We give a simple proof of a lower bound for the Dirichlet heat kernel in terms of the Gaussian heat kernel. Using this we establish a non-existence result for  semilinear heat equations with zero Dirichlet boundary conditions and initial data in $L^q(\Omega)$ when the source term $f$ is non-decreasing and $\limsup_{s\to\infty}s^{-\gamma}f(s)=\infty$ for some $\gamma>q(1+2/n)$.
This allows us to construct a  locally Lipschitz $f$  satisfying the Osgood condition $\int_{1}^{\infty}1/f(s)\ \,\d s =\infty$, which ensures global existence for bounded initial data,  such that for every $q$ with $1\le q<\infty$ there is an initial condition $u_0\in L^q(\Om )$ for which the corresponding semilinear problem has no local-in-time solution.

\end{abstract}

\begin{keyword}
Semilinear heat equation\sep Dirichlet  problem\sep non-existence \sep instantaneous blow-up \sep Osgood condition\sep Dirichlet heat kernel.

\end{keyword}

\end{frontmatter}


\section{Introduction}
\label{Intro}

In a previous paper \cite{LRS1} we showed that for locally Lipschitz $f$ with $f>0$ on $(0,\infty )$, the Osgood condition
\begin{equation}\label{Osgood1}
\int_1^\infty\frac{1}{f(s)}\,\d s=\infty,
\end{equation}
which ensures global existence of solutions of the scalar ODE $\dot x=f(x)$, is not sufficient to guarantee the local existence of solutions of the Cauchy problem
\begin{equation}\label{inRn}
u_t=\Delta u+f(u)
\end{equation}
for initial data in $L^q(\R^n)$, $1\le q<\infty$. This is in stark contrast to the case of bounded initial data, for which (\ref{Osgood1}) implies
that any solution of (\ref{inRn}) exists  globally in time; see \cite{RS1}, for example.

In \cite{LRS1} we considered the PDE (\ref{inRn}) on the whole space $\R^n$, which allowed us to use in our calculations the explicit form of the Gaussian heat kernel,
\be
G_n(x,y;t)=(4\pi t)^{-n/2}\e^{-|x-y|^2/4t}.\label{eq:Kn}
\ee
 The main result there was that for each $q$ with $1\le q<\infty$ one can find a non-negative,  locally Lipschitz and Osgood $f$ such that there are initial data in $L^q(\R^n)$ for which there is no local-in-time integrable solution of (\ref{inRn}).

In this paper we obtain a similar result for the equation posed with Dirichlet boundary conditions on a bounded domain, by using Gaussian lower bounds on the Dirichlet heat kernel. Indeed, in Section \ref{lbK} (Theorem~\ref{ourvdB}) we give a lower bound for the Dirichlet heat kernel on a bounded domain $\Omega$:
\begin{equation}\label{easyvdb}
K_\Omega(x,y;t)\ge \B^n G_n(x,y;t)\qquad\mbox{for}\quad t\le\epsilon^2/n,
\end{equation}
whenever $[x,y]$, the line segment joining $x$ and $y$, is contained in the interior of $\Omega$ and is always at least a distance $\epsilon$ from the boundary of $\Omega$.  Here, $\B >0$ is an explicit constant. Based on the argument of van den Berg \cite{vdBerg90} we also provide in the appendix a proof of a result valid for all $t>0$
 $$
K_\Omega(x,y;t)\ge \e^{-n^2t/4\epsilon^2}G_n(x,y;t),
$$
but  (\ref{easyvdb}) is sufficient for our purposes and has a significantly simpler proof.


More explicitly, we focus throughout the paper on the following problem (P), posed on a smooth bounded domain $\Om\subset\R^n$:
\bs
({\rm P})\quad\left\{
\begin{array}{rll}
u_t =&  \Delta u +  f(u) & {\rm in}\quad \Om,\\
u = & 0 &{\rm on}\quad \partial\Om,\\
u(x,0) =& u_0(x) &{\rm in}\quad \Om.
\end{array}
\right.
\es
The source term $f:[0,\infty)\to[0,\infty)$ is non-decreasing and satisfies the asymptotic growth condition
\begin{equation}\label{atinf}
\displaystyle\limsup_{s\to\infty}s^{-\gamma}f(s)=\infty.
\end{equation}
We show in Theorem \ref{thm:main} that if (\ref{atinf}) holds for some $\gamma>q(1+2/n)$ then one can find a non-negative $u_0\in L^q(\Om)$ such that there is no solution of (P) that is in $L^1_{\rm loc}(\Omega)$ for $t>0$.

We finish (see Corollary~\ref{cor:lip}) by constructing a
function $f$ that grows quickly enough such that (\ref{atinf}) holds for every $\gamma\ge0$, but nevertheless still verifies the Osgood condition (\ref{Osgood1}). This example shows that there are functions $f$ for which (P) is well posed in $L^\infty(\Omega)$ but not in any $L^p(\Omega)$ with $1\le p<\infty$.

One can see this result as in some sense dual to that of Fila et al. \cite{FNV} (see also Section 19.3 of \cite{SQ}), who show that there exists an $f$ such that all positive solutions of $\dot x=f(x)$ blow up in finite time while all solutions of (P) with Dirichlet boundary conditions are global and bounded.


\section{A Gaussian lower bound for the Dirichlet heat kernel}\label{lbK}

For any smooth domain $D$ in $\R^n$ (i.e.\ $D$ is smooth, open, and connected), we denote by $K_{D}(x,y,t)$  the Dirichlet heat kernel associated with the Dirichlet heat semigroup $S_{D}(t)$, i.e.
\be
w_{D}(x,t)=(S_{D}(t)w_0)(x):=\int_{D }K_{D}(x,y;t) w_0(y)\, \d y
\label{eq:DHK}\ee
is the classical solution of the linear heat equation
\bs
 w_t = \Delta w  &{\rm in}& D,\label{eq:HE}\\
 w = 0 &{\rm on}& \partial D,\label{eq:HBC}\\
 w= w_0 & {\rm in} & D ,\label{eq:HIC}
 \es
In the special case where $D=\R^n$, we will denote the Gaussian heat kernel on the whole space by $G_n(x,y;t)$, as given by (\ref{eq:Kn}).

 In this section we provide a proof of a particular case of a result due to van den Berg \cite{vdBerg90}, which shows that away from the boundary the Dirichlet heat kernel is bounded below by a multiple of the Gaussian kernel for the heat equation on the whole space. In this context the result for $\Omega\subset\R^n$ is an easy corollary of the result in $\R$; in the one-dimensional case our proof significantly simplifies that of \cite{vdBerg90}.

\begin{theorem}\label{ourvdB}
  Let $\Omega$ be a domain in $\R^n$, and denote by $K_\Omega(x,y;t)$ the Dirichlet heat kernel on $\Omega$. Suppose that
  \begin{equation}\label{edef}
  \epsilon:=\inf_{z\in[x,y]}\ \dist(z,\partial\Omega)>0,
  \end{equation}
  where $[x,y]$ denotes the line segment joining $x$ and $y$ (so in particular $[x,y]$ is contained in the interior of $\Omega$). Then for $t\le \epsilon^2/n$
  $$
  K_\Omega(x,y;t)\ge \B^nG_n(x,y;t),
  $$
  where $\B =1-2/\e>0$.
\end{theorem}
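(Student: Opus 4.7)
The plan is to prove a one-dimensional version of the estimate first, and then derive the $n$-dimensional statement by fitting a suitable axis-aligned box inside $\Omega$ around the segment $[x,y]$ and exploiting the product form of the Dirichlet heat kernel on that box.

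\medskip
\noindent\emph{Step 1: the one-dimensional estimate.} The first objective is to show that if $(a,b)\subset\R$ and $x,y\in(a,b)$ with $\min(x-a,y-a,b-x,b-y)\ge\epsilon$, then $K_{(a,b)}(x,y;t)\ge\B\, G_1(x,y;t)$ for every $t\le\epsilon^2$. The starting point is the explicit reflection formula on the half-line,
\[
K_{(a,\infty)}(x,y;t)=G_1(x,y;t)\bigl(1-\e^{-(x-a)(y-a)/t}\bigr),
\]
and its mirror image for $(-\infty,b)$. The next ingredient is the elementary inequality
\[
K_{(a,b)}(x,y;t)\ge K_{(a,\infty)}(x,y;t)+K_{(-\infty,b)}(x,y;t)-G_1(x,y;t),
\]
which can be justified either probabilistically---via $\mathbb{P}(A\cup C)\le 1$ for the Brownian-bridge events $A=\{\text{bridge from }x\text{ to }y\text{ stays above }a\}$ and $C=\{\text{stays below }b\}$---or analytically, by checking that the difference of the two sides satisfies a parabolic maximum-principle inequality. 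Combining the two and noting that $t\le\epsilon^2$ forces each reflection exponential to be at most $\e^{-1}$, one obtains $K_{(a,b)}(x,y;t)\ge(1-2/\e)G_1(x,y;t)=\B\, G_1(x,y;t)$.

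\medskip
\noindent\emph{Step 2: reduction from $\Omega$ to a box.} After a translation and rotation one may assume $x=(x_1,0,\ldots,0)$ and $y=(y_1,0,\ldots,0)$ with $x_1\le y_1$. Setting $r=\epsilon/\sqrt{n}$, the plan is to place the open box
\[
B=(x_1-r,y_1+r)\times(-r,r)^{n-1}
\]
inside $\Omega$. A short geometric check shows that every $u\in B$ lies at Euclidean distance strictly less than $\sqrt{r^2+(n-1)r^2}=r\sqrt{n}=\epsilon$ from some point of $[x,y]$, so $B\subset\bigcup_{z\in[x,y]}B_\epsilon(z)\subset\Omega$. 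Domain monotonicity of the Dirichlet heat kernel (a standard consequence of the parabolic maximum principle) then yields $K_\Omega(x,y;t)\ge K_B(x,y;t)$.

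\medskip
\noindent\emph{Step 3: assembling the product.} On the box $B$ the Dirichlet heat kernel factorises as a product of $n$ one-dimensional Dirichlet heat kernels, one per coordinate axis. In each such factor both arguments lie at distance at least $r=\epsilon/\sqrt{n}$ from the endpoints of the underlying interval, so provided $t\le r^2=\epsilon^2/n$ the bound of Step~1 applies to every factor. Multiplying the $n$ inequalities together and using $G_n(x,y;t)=\prod_{i=1}^{n}G_1(x_i,y_i;t)$ delivers the claim $K_\Omega(x,y;t)\ge\B^n G_n(x,y;t)$.

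\medskip
\noindent\emph{Main obstacle.} The delicate part is not any single estimate but the balancing of constants: one must relate the single parameter $\epsilon$ controlling the $n$-dimensional tube around $[x,y]$ to the individual distance-to-boundary scales used in each of the $n$ one-dimensional factors. Requiring a symmetric box aligned with $[x,y]$ forces $r=\epsilon/\sqrt{n}$, which is exactly what produces the time threshold $t\le\epsilon^2/n$ stated in the theorem.
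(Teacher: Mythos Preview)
Your overall architecture is exactly the paper's: prove a one-dimensional lower bound, then enclose $[x,y]$ in an axis-aligned box of half-width $\epsilon/\sqrt{n}$, use domain monotonicity, and factorise the box kernel as a product of one-dimensional kernels. The $n$-dimensional Step~2 and Step~3 are essentially identical to what the paper does.

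The one genuine difference is in Step~1. The paper writes the full method-of-images expansion of $K_{(0,2a)}$ as an alternating sum over $\Z$, extracts the leading three terms, and then checks by a pairing argument that the remaining tail is non-negative. You instead use the closed-form half-line kernels together with the Bonferroni-type inequality $K_{(a,b)}\ge K_{(a,\infty)}+K_{(-\infty,b)}-G_1$ (equivalently, $\mathbb{P}(A\cap C)\ge\mathbb{P}(A)+\mathbb{P}(C)-1$ for the Brownian-bridge events). Both routes land on the same three-term bound $G_1\bigl(1-\e^{-(x-a)(y-a)/t}-\e^{-(b-x)(b-y)/t}\bigr)$, hence the same constant $\beta=1-2/\e$ for $t\le\epsilon^2$. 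Your argument is a little more conceptual and avoids the explicit sum manipulation; the paper's argument is fully self-contained and does not need to invoke either the probabilistic interpretation or a separate maximum-principle verification of the inclusion--exclusion inequality.
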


Note that if $\Omega$ is convex then $\epsilon$ in (\ref{edef}) is simply given by
$$
\epsilon=\min(\dist(x,\partial\Omega),\dist(y,\partial\Omega)).
$$

We delay the proof of Theorem~\ref{ourvdB} for a moment. Following \cite{vdBerg90} we begin with the corresponding result for an interval in $\R$. Our proof is somewhat simpler than that of Lemma 8 in \cite{vdBerg90}, and non-probabilistic (cf.\ \cite{Varadhan}), since we are able to write down directly the Dirichlet kernel in terms of a sum of Gaussian kernels on the whole line.

We write $K_a$ for the one-dimensional heat kernel on $(-a,a)$.

\begin{lemma}\label{1dvdB}
  Take $a>0$. Then for any $x,y\in\Omega=(-a,a)$
 $$
  K_a(x,y;t)\ge G_1(x,y;t)\left[1-2\e^{-\epsilon^2/t}\right],
 $$
  where $\epsilon=\dist([x,y],\partial\Omega)$. In particular for $t\le\epsilon^2$
   \begin{equation}\label{rough1}
  K_a(x,y;t)\ge \B G_1(x,y;t),
  \end{equation}
  where $\B =1-2/\e>0$.
\end{lemma}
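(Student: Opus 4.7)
The plan is to write $K_a(x, y; t)$ as an alternating series of whole-line Gaussians via the method of images, and then to isolate the three leading contributions using the standard alternating series estimate. Iterated reflection of $y$ across the endpoints $\pm a$ gives
\[
K_a(x, y; t) = \sum_{m \in \Z} (-1)^m G_1(x, z_m; t), \qquad z_m = (-1)^m y + 2ma,
\]
for $x, y \in (-a, a)$; this is verified directly, since the right-hand side solves the heat equation on $\R$, vanishes on $\{\pm a\}$, and reduces to $\delta_y$ at $t = 0$ on $(-a,a)$.

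The next step is to observe that $|x - z_m|$ is non-decreasing in $|m|$: the consecutive increments alternate between $2(a - y)$ and $2(a + y)$, both non-negative since $y \in (-a, a)$. Because $G_1(x, z; t)$ depends only on $|x - z|$ and is strictly decreasing in it, the two half-tails $\sum_{m \ge 1}(-1)^m G_1(x, z_m; t)$ and $\sum_{m \le -1}(-1)^m G_1(x, z_m; t)$ are alternating series with monotonically decreasing absolute values, each beginning with the negative term $-G_1(x, z_{\pm 1}; t)$. The classical alternating series estimate bounds each half-tail below by its first term, and summing with the $m = 0$ contribution yields
\[
K_a(x, y; t) \ge G_1(x, y; t) - G_1(x, 2a - y; t) - G_1(x, -2a - y; t).
\]

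To finish, a direct difference-of-squares computation using the explicit form of $G_1$ gives
\[
\frac{G_1(x, 2a - y; t)}{G_1(x, y; t)} = \e^{-(a-x)(a-y)/t}, \qquad \frac{G_1(x, -2a - y; t)}{G_1(x, y; t)} = \e^{-(a+x)(a+y)/t}.
\]
Taking $x \le y$ without loss of generality, $\epsilon = \min(a - y, a + x)$, so both $(a-x)(a-y)$ and $(a+x)(a+y)$ are bounded below by $\epsilon^2$; this gives the quoted bound $K_a(x, y; t) \ge G_1(x, y; t)[1 - 2\e^{-\epsilon^2/t}]$. The ``in particular'' inequality (\ref{rough1}) then follows by substituting $t \le \epsilon^2$, so that $\e^{-\epsilon^2/t} \le 1/\e$ and the bracket is at least $\B = 1 - 2/\e$. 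The step that requires genuine thought is the monotonicity of $|x - z_m|$, which licenses the alternating series estimate; everything else is algebraic bookkeeping.
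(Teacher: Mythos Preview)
Your proof is correct and follows essentially the same approach as the paper: both write the Dirichlet kernel on the interval via the method of images, isolate the three leading terms, and discard the tail by exploiting the monotonicity of the distances $|x-z_m|$ to the image points. The only organisational difference is that you invoke the alternating-series estimate on the two half-tails $m\ge 1$ and $m\le -1$, whereas the paper groups the remaining terms into four-term packets and compares within each; these are equivalent bookkeeping devices, and the final algebraic step bounding $(a\mp x)(a\mp y)\ge\epsilon^2$ is identical.
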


Note that Corollary \ref{good1d} in the Appendix improves the lower bound in (\ref{rough1}) to $\e^{-\pi^2t/4\epsilon^2}G_1(x,y;t)$ for all $t>0$, but the result of this lemma is sufficient for the arguments in the main body of this paper.

\begin{figure}[th]\label{figex}
\begin{centering}
  \psfrag{f}[][1]{$u_0$}
  \psfrag{0}[][1]{$0$}
  \psfrag{a}[][1]{$2a$}
  \psfrag{y}[][1]{$y$}
  \includegraphics[scale=.8]{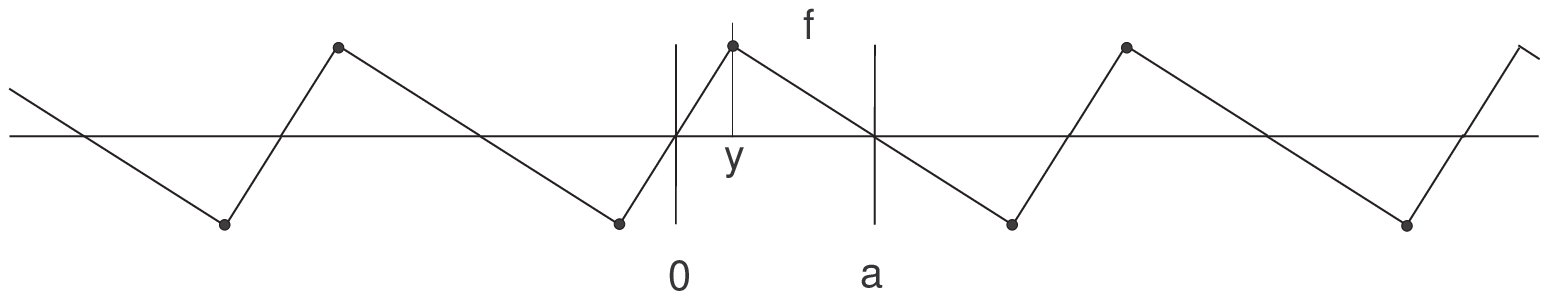}
  \end{centering}
  \begin{caption}{For a particular $u_0$ defined on $[0,2a]$, an illustration of the periodic extension that is anti-symmetric about $x=0$ and $x=2a$. Dots indicate positions and signs of the delta functions that give rise to $K_{(0,2a)}(x,y;t)$.}
  \end{caption}
\end{figure}

\begin{proof}
For notational reasons it is simpler to treat the problem on $(0,2a)$ rather than $(-a,a)$, but since the equations and the resulting lower bound are translation invariant this does not effect the result. We write down the Dirichlet heat kernel on $(0,2a)$ by reflection. The essential idea is shown in Figure 1: the action of the heat equation on $[0,2a]$ with initial data $u_0$ is the same as the action of the heat equation on $\R$ with the periodically extended initial data as illustrated, since this extension is antisymmetric about $0$ and $a$ and the Gaussian kernel $G_1(x,y;t)$ is symmetric about $x$ for any $x\in\R$.

The  contribution to the heat kernel for $x\in[0,2a]$ from a source at $y\in(0,2a)$ will be the sum of the Gaussian kernels with positive point sources at $y+4ka$ and negative point sources at $-y+4ka$ (see Figure 1, again), yielding
\begin{equation}\label{byi}
K_{(0,2a)}(x,y;t)=\frac{1}{\sqrt{4\pi t}}\sum_{k\in\Z}\e^{-|x-(y+4ka)|^2/4t}-\e^{-|x-(-y+4ka)|^2/4t},
\end{equation}
see Figure 2. [Even if one has doubts about the above derivation, it is clear that $K(x,y,t)$ in (\ref{byi}) satisfies the heat equation, $K(0,y;t)=K(2a,y;t)=0$, and $K(x,y;0)=\delta(y)$ for $x,y\in(0,2a)$.]

\begin{figure}[th]\label{figk}
\centering
\begin{tabular}{cccc}
   \includegraphics[scale=.2]{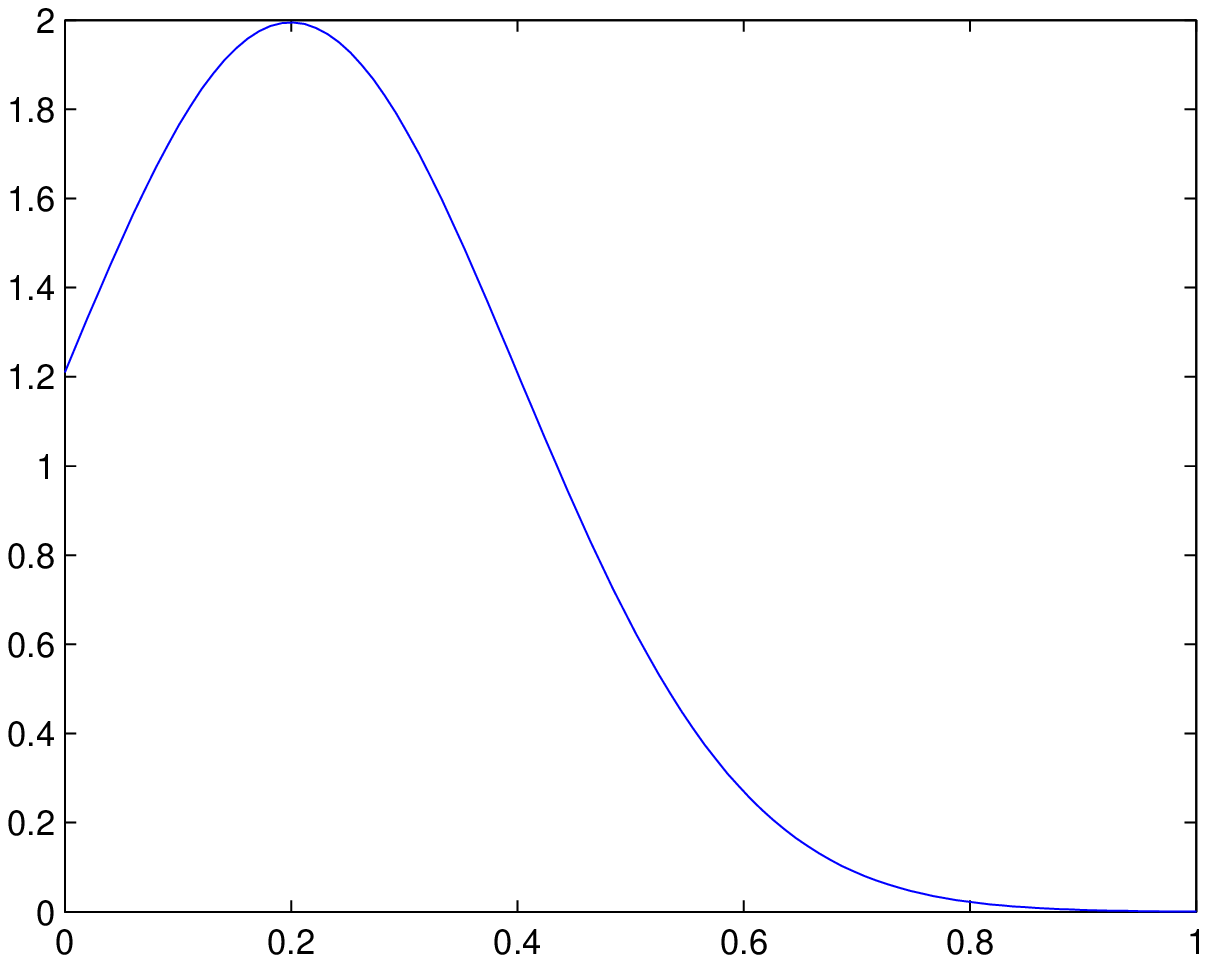}&
   \includegraphics[scale=.2]{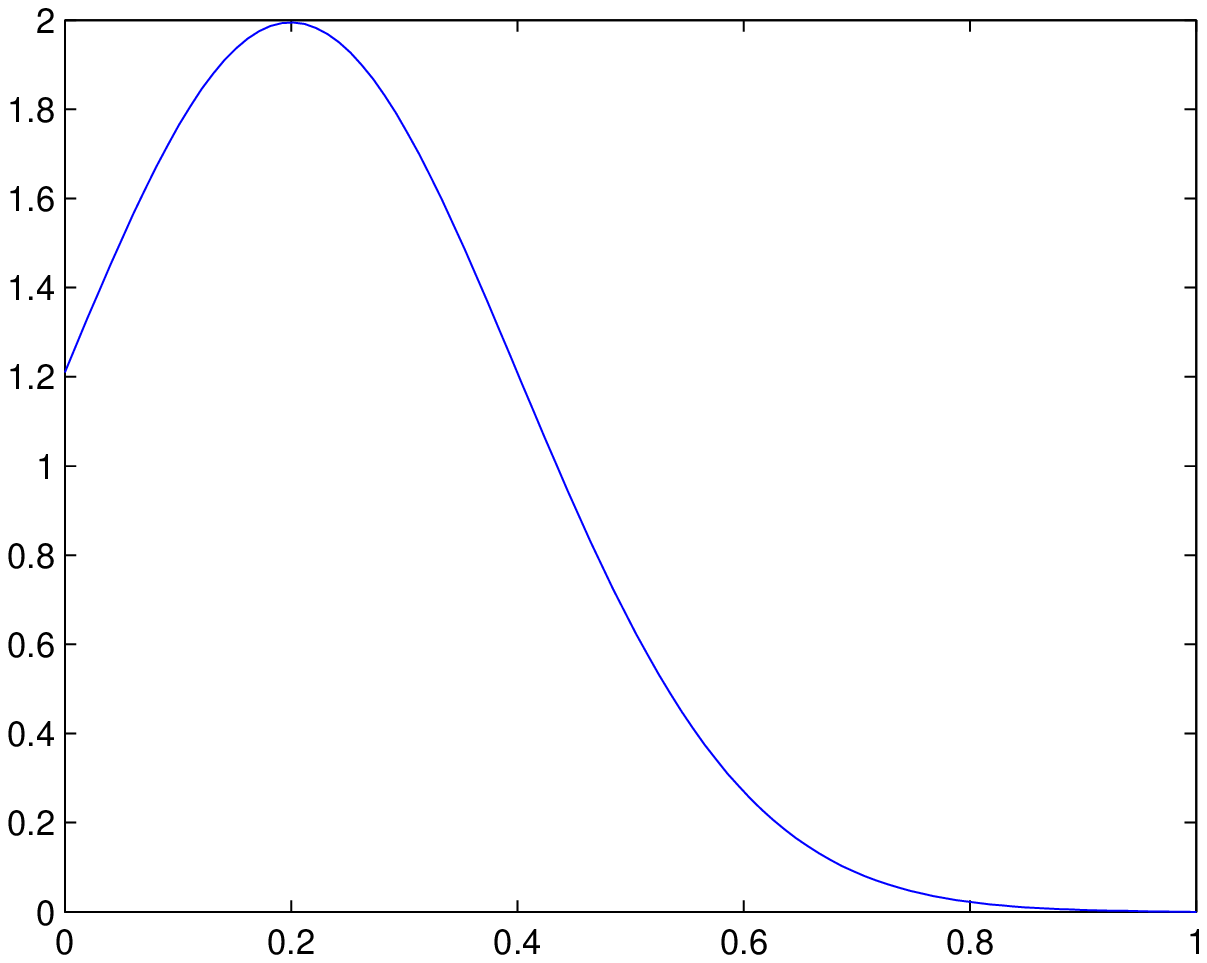}&
   \includegraphics[scale=.2]{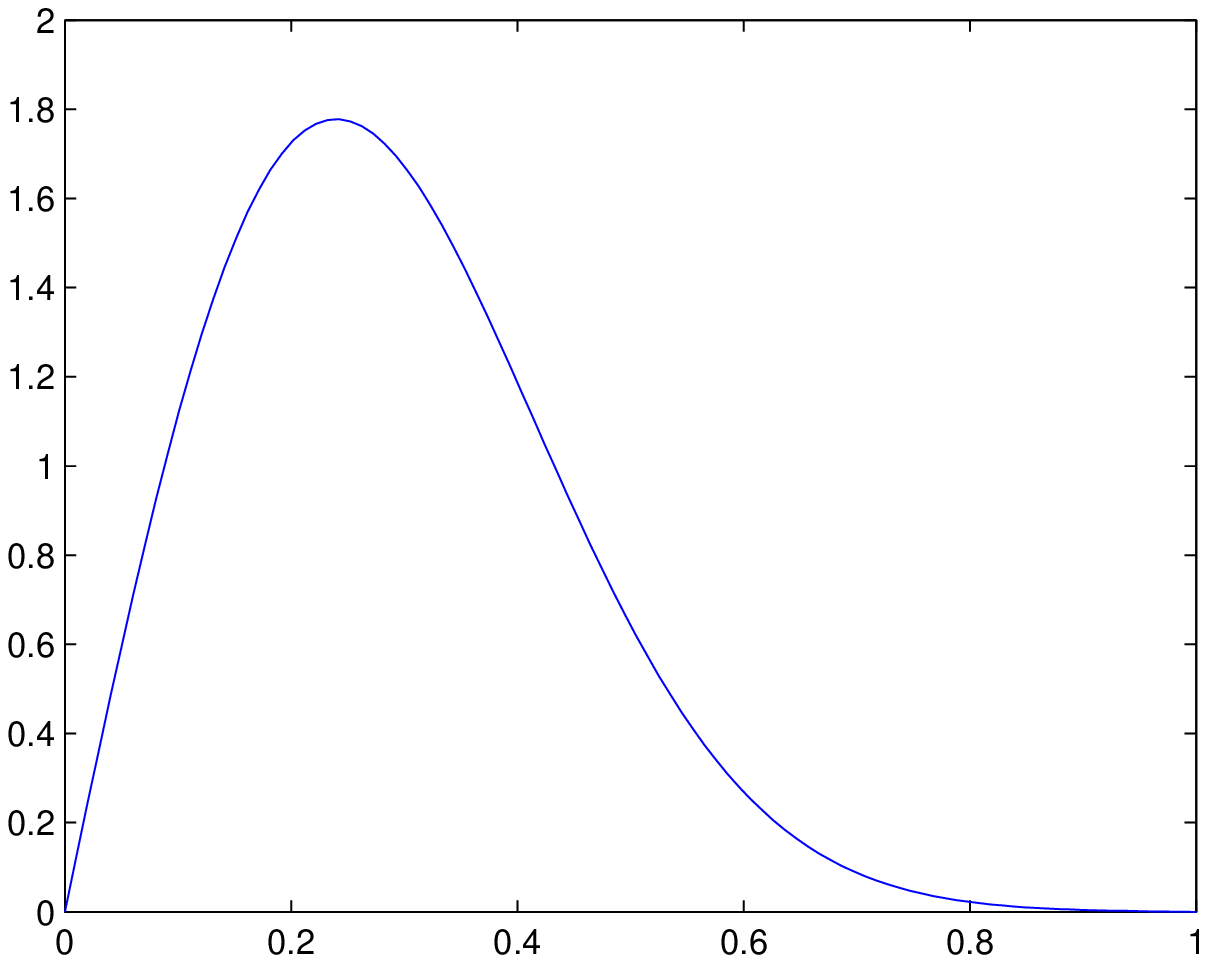}&
   \includegraphics[scale=.2]{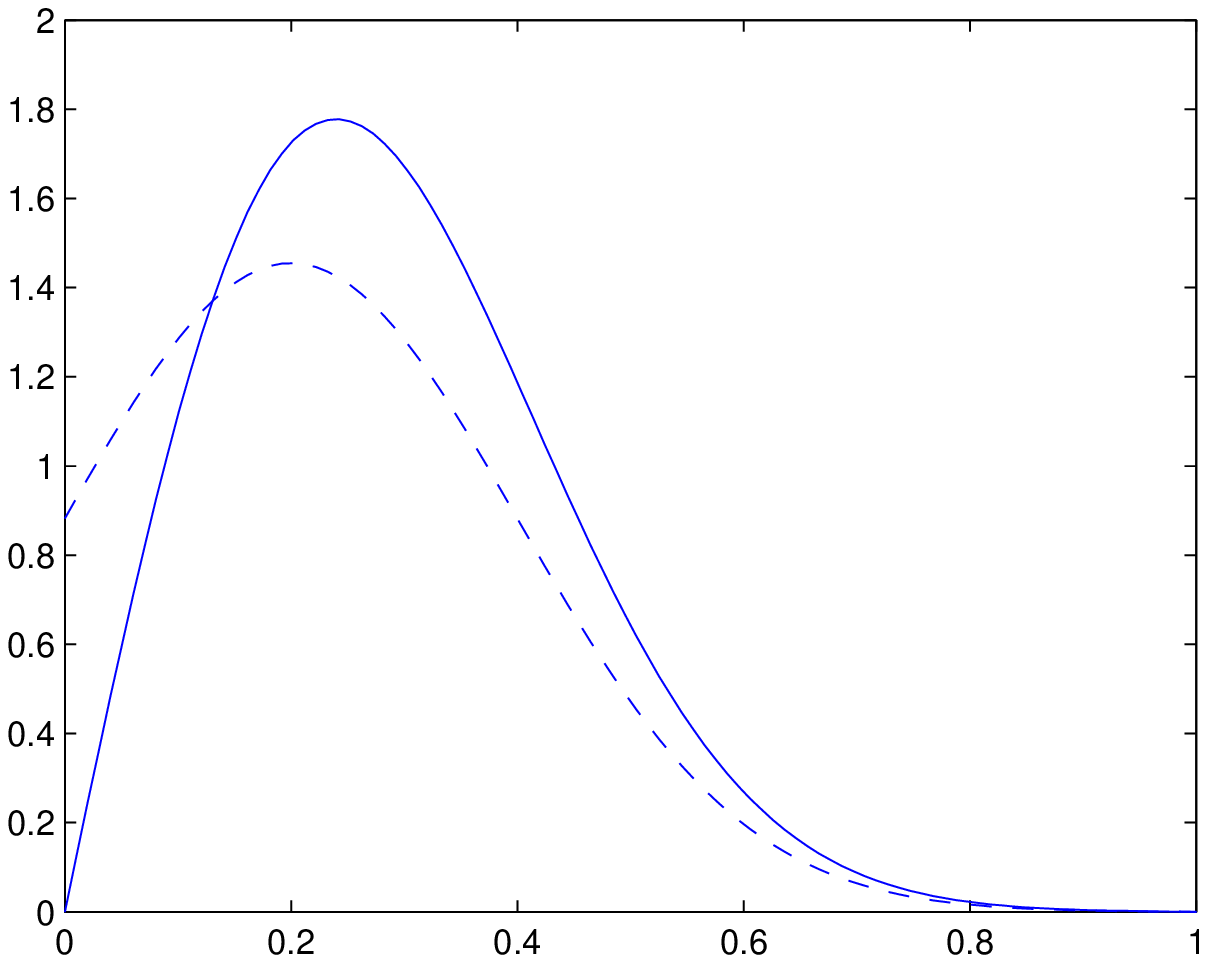}
   \end{tabular}
  \begin{caption}{Dirichlet heat kernel on $[0,1]$ as a sum of Gaussians for $y=0.2$, $t=0.02$. From left to right: Gaussian kernel on $\R$; one subtraction ($k=1$) to enforce boundary condition at $x=1$ (little change); second subtraction ($k=0$) towards satisfying the boundary condition at $x=0$; the heat kernel on $[0,1]$ (additional terms make little difference), with the lower bound from Lemma \ref{1dvdB} indicated by a dashed line.}
  \end{caption}
\end{figure}

Now we simply rewrite the sum:
\begin{align*}
&\sqrt{4\pi t}K_{(0,2a)}(x,y;t)=\sum_{k\in\Z}\e^{-|x-(y+4ka)|^2/4t}-\e^{-|x-(-y+4ka)|^2/4t}\\
&\ =\e^{-|x-y|^2/4t}-\e^{-|x+y|^2/4t}-\e^{-|x+y-4a|^2/4t}\\
&\quad+\sum_{k=1}^\infty\left\{\e^{-|x-(y+4ka)|^2/4t}+\e^{-|x-(y-4ka)|^2/4t}-\e^{-|(x-(-y+4a(k+1))|^2/4t}\right.\\
&\qquad\qquad\left.-\e^{-|(x-(-y-4ka)|^2/4t}\right\}\\
&\ =\e^{-|x-y|^2/4t}\left[1-\e^{-xy/t}-\e^{-(2a-x)(2a-y)/t}\right]\\
&\quad +\sum_{k=1}^\infty\e^{-|x-y-4ka|^2/4t}+\e^{-|x-y+4ka|^2/4t}-\e^{-|x+y-4(k+1)a|^2/4t}-\e^{-|x+y+2ka|^2/4t}.
\end{align*}
Noting that
$$|x+y+4ka|>|x-y+4ka|\qquad\mbox{and}\qquad|4(k+1)a-(x+y)|>|4ka-(x-y)|$$
for $k\ge 1$ and $x,y\in (0,2a)$, it follows that
$$
\sqrt{4\pi t}K_{(0,2a)}(x,y;t)\ge\e^{-|x-y|^2/4t}\left[1-2\e^{-\epsilon^2/t}\right].
$$
 Finally note that for $t\le\epsilon^2$ the term in the square brackets is at least $\B =1-2/\e$.\end{proof}

The idea of the proof of Theorem \ref{ourvdB}, inspired by that of Lemma 9 in \cite{vdBerg90}, is illustrated in Figure 3. We bound the Dirichlet heat kernel on $\Omega$ below by the kernel on the parallelepiped $\Pi$, which is simply the product of one-dimensional kernels which we can control using Lemma~\ref{1dvdB}. In this way the proof uses the monotonicity of the Dirichlet heat kernel with respect to the domain:
$$
\Omega\subset U\qquad\Rightarrow\qquad K_\Omega(x,y;t)\le K_U(x,y;t).
$$
A probabilistic proof can be found in \cite{vdBerg90}; an analytic proof using the theory of semigroups can be found in the notes by Arendt \cite{Arendt}.


\begin{figure}[th]\label{figcon}
\begin{centering}
  \psfrag{ex}[][][1]{$x$}
  \psfrag{y}[][][1]{$y$}
  \psfrag{dO}[][1]{$\partial\Omega$}
  \psfrag{e}[][1]{$\epsilon$}
  \psfrag{en}[][1]{$2\epsilon/\sqrt{n}$}
  \psfrag{P}[][1]{$\Pi$}
  \psfrag{O}[][1]{$\Omega$}
  \includegraphics[scale=1.2]{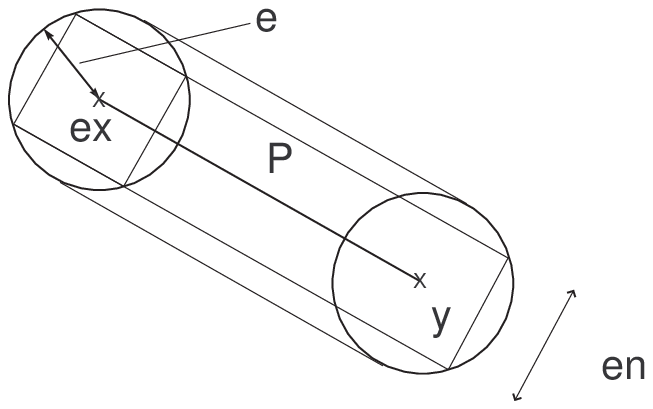}
  \begin{caption}
  {The parallelepiped $\Pi$ with $n-1$ sides of length $2\epsilon/\sqrt{n}$ when ${\rm dist}(x,\partial\Omega)=\epsilon$.}
  \end{caption}
  \end{centering}
\end{figure}

\begin{proof}[Proof of Theorem \ref{ourvdB}]

By the definition of $\epsilon$, the line segment joining $x$ and $y$ is entirely contained in a parallelepiped $\Pi$ that lies entirely within $\bar\Omega$, with one side of length $|x-y|+2\epsilon/\sqrt n$ and $n-1$ sides of length $2\epsilon/\sqrt n$, see Figure 3. Note that $x$ and $y$ are at least a distance $\epsilon/\sqrt n$ from all faces of $\Pi$. By monotonicity of the Dirichlet heat kernel with respect to the domain
$$
K_\Omega(x,y,t)\ge K_\Pi(x,y,t).
$$
If we now refer points in $\Pi$ to coordinate axes aligned along $[x,y]$ and in the perpendicular directions, so that $x=(\tilde x,0,\ldots,0)$ and $y=(\tilde y,0,\ldots,0)$, we can use the separation of variables property to write
\begin{align*}
K_\Pi(x,y;t)&=K_{\frac{1}{2}|x-y|+\epsilon/\sqrt n}(\tilde x,\tilde y,t)[K_{\epsilon/\sqrt n}(0,0,t)]^{n-1}\\
&\ge\beta(4\pi t)^{-1/2}\e^{-|\tilde x-\tilde y|^2/4t}\beta^{n-1}(4\pi t)^{-(n-1)/2}\\
&=(4\pi t)^{-n/2}\e^{-|x-y|^2/4t}\B^n\\
&=\B ^nG_n(x,y,t),
\end{align*}
for all $t\le\epsilon^2/n$, using the one-dimensional lower bound from Lemma \ref{1dvdB}.
\end{proof}

%
%
%

\section{A lower bound for the heat equation}
\label{main result}

Without loss of generality we henceforth assume that $\Om$ contains the origin. For $r>0$, $B(x,r)$ will denote the Euclidean ball in $\R^n $ of radius $r$ centred at $x$, and in an abuse of notation we write $B(r)$ for $B(0,r)$.

As an ingredient in the proof of our main result, we want to show that the action of the heat equation on the singular initial condition
\begin{equation}\label{w0is}
w_0(x)=|x|^{-\alpha}\chi_R:=\left\{\begin{array}{ll}
|x|^{-\A},& |x|\le R,\\
0,& |x|>R
\end{array}\right.
\end{equation}
does not have too pronounced an effect for short times. It is easy to see that
$$
w_0(x)>\phi\qquad\mbox{for}\quad |x|<\phi^{-1/\alpha};
$$
we now show that such a lower bound holds for a similar set of $x$ for sufficiently small times.

\begin{proposition}\label{prop:ball}
Fix $\A\in (0,n)$ and pick $R>0$  such that $\overline{B(R)}\subset \Om$. If $w_{\Omega}(x,t)$  denotes the solution of the linear heat equation on $\Om$ with initial condition\footnote{Strictly speaking $w_0$ is defined on the whole of $\R^n$; we take as initial condition the function in (\ref{w0is}) restricted to $\Omega$.} $w_0=|x|^{-\alpha}\chi_R$, as represented by (\ref{eq:DHK}),
then there exist  constants  $\sigma=\sigma(R, \A ,n)>0$ and $\phi_*=\phi_* (R, \A ,n)>0$ such that
\be\label{wbig}
w_{\Omega}(x,t)\ge\phi\qquad\mbox{for all}\quad |x|\le \sigma\phi^{-1/\alpha}\quad\mbox{and}\quad  0\le t\le \sigma\phi^{-2/\alpha}
\ee
for any $\phi > \phi_*$.
\end{proposition}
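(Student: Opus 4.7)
The plan is to use Theorem~\ref{ourvdB} to replace the Dirichlet kernel by a multiple of the Gaussian kernel on $\R^n$, then to obtain a pointwise lower bound on $G_n\ast w_0$ via a single ball estimate, and finally to match the scaling directly. First I set $d:=\dist(\overline{B(R)},\partial\Omega)>0$, which is positive since $\overline{B(R)}\subset\Omega$. For $|x|\le R/2$ and $\sqrt{t}\le R/2$, every segment $[x,y]$ with $y\in B(x,\sqrt{t})$ lies in the convex set $\overline{B(R)}$, so $\inf_{z\in[x,y]}\dist(z,\partial\Omega)\ge d$, and Theorem~\ref{ourvdB} yields $K_\Omega(x,y;t)\ge\beta^n G_n(x,y;t)$ whenever $t\le d^2/n$. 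Since $|y|\le R$ on this ball, $w_0(y)=|y|^{-\alpha}$ there, and hence
\[
w_\Omega(x,t)\ge\beta^n\int_{B(x,\sqrt{t})}G_n(x,y;t)|y|^{-\alpha}\,\d y.
\]

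For $y\in B(x,\sqrt{t})$ one has $e^{-|x-y|^2/4t}\ge e^{-1/4}$, and the minimum of $|y|^{-\alpha}$ over this ball is attained at the point farthest from the origin, giving $|y|^{-\alpha}\ge(|x|+\sqrt{t})^{-\alpha}$. Writing $\omega_n$ for the volume of the unit ball in $\R^n$, integration then produces
\[
w_\Omega(x,t)\ge\beta^n\omega_n(4\pi)^{-n/2}e^{-1/4}(|x|+\sqrt{t})^{-\alpha}\ge C\min(|x|^{-\alpha},t^{-\alpha/2}),
\]
where the second inequality uses $|x|+\sqrt{t}\le 2\max(|x|,\sqrt{t})$ and I set $C=\beta^n\omega_n(4\pi)^{-n/2}e^{-1/4}2^{-\alpha}$. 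This two-scale lower bound is the heart of the argument: its scaling already matches the conclusion exactly, so the final step reduces to algebraic matching.

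Now take $\sigma\le 1$. For $|x|\le\sigma\phi^{-1/\alpha}$ one has $|x|^{-\alpha}\ge\sigma^{-\alpha}\phi\ge\sigma^{-\alpha/2}\phi$, and for $0<t\le\sigma\phi^{-2/\alpha}$ one has $t^{-\alpha/2}\ge\sigma^{-\alpha/2}\phi$; the previous step then gives $w_\Omega(x,t)\ge C\sigma^{-\alpha/2}\phi\ge\phi$ as soon as $\sigma\le C^{2/\alpha}$. The case $t=0$ is immediate from $w_\Omega(x,0)=w_0(x)=|x|^{-\alpha}\ge\sigma^{-\alpha}\phi\ge\phi$. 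The auxiliary constraints $|x|\le R/2$, $\sqrt{t}\le R/2$, and $t\le d^2/n$ translate into finitely many lower bounds on $\phi$ of the form $\phi\ge(2\sigma/R)^{\alpha}$, $\phi\ge(2\sqrt\sigma/R)^{\alpha}$, $\phi\ge(n\sigma/d^{2})^{\alpha/2}$, and I define $\phi_*=\phi_*(R,\alpha,n)$ to be their maximum. The only point requiring care is ensuring that the ball $B(x,\sqrt{t})$ stays inside the support of $w_0$ and in the region where Theorem~\ref{ourvdB} applies, which is exactly what forces the preliminary restrictions on $|x|$ and $\sqrt{t}$; everything else is a clean rescaling.
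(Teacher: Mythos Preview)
Your argument is correct, and it takes a genuinely different route from the paper's. The paper proceeds by a scaling trick together with the parabolic maximum principle: it shows, via a change of variables in the Gaussian integral, that $w_\Omega(\hat x/\psi,t)\ge c\psi^\alpha w(\hat x,\psi^2 t)$ for $|\hat x|=R$, where $w$ is the whole-space heat flow; this yields the bound on the sphere $|x|=R\psi^{-1}$, and the maximum principle then fills in the interior of the ball. Your approach bypasses both ingredients by restricting the kernel integral to the single ball $B(x,\sqrt t)$, where the Gaussian is bounded below by a constant and $|y|^{-\alpha}$ is bounded below by $(|x|+\sqrt t)^{-\alpha}$, directly producing the two-scale estimate $w_\Omega(x,t)\ge C(|x|+\sqrt t)^{-\alpha}$. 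This is more elementary (no comparison principle, no reference to the free solution $w$), and it even yields a $\sigma$ depending only on $(\alpha,n)$, whereas the paper's $\sigma$ carries $R$ and, implicitly, $\Omega$ through $T=\epsilon^2/n$. Conversely, the paper's scaling argument is more conceptual and would adapt to initial data that are merely comparable to a homogeneous profile near the origin. A small cosmetic point: your $\phi_*$ depends on $d=\dist(\overline{B(R)},\partial\Omega)$ and hence on $\Omega$, not just on $(R,\alpha,n)$; the paper has exactly the same implicit dependence through $T$, so this is not a defect of your proof but of the statement's labelling of constants.
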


\begin{proof}
Let  $w$ denote the solution of the linear heat equation on  $\R^n$  with the same initial condition $w_0=|x|^{-\alpha}\chi_R$.
Let $\epsilon=\inf_{x\in B(R)}\dist(x,\partial\Omega)>0$; then it follows from   Theorem~\ref{ourvdB} that with $T=\epsilon^2/n$ we have
$$
K_\Omega(x,y,t)\ge \beta^nG_n(x,y,t),\qquad \forall x,y\in B(R),\quad t\in (0,T].
$$
From here on $c$ will denote any generic constant, and may change from line to line.

Taking  $|\xhat | =R$, $ t\in (0, T]$ and $\psi> 1$, we have
\begin{align*}
  w_{\Omega}(\hat x/\psi,t)&=\int_{\Omega} K_\Omega(\hat x/\psi,y,t)w_0(y)\,\d y =\int_{B(R)} K_\Omega(\hat x/\psi,y,t)|y|^{-\alpha}\,\d y\\
  &\ge c\int_{B(R)} G_n(\hat x/\psi,y,t)|y|^{-\alpha}\,\d y\\
  &\ge c(4\pi t)^{-n/2}\int_{B(R)}\e^{-|(\hat x/\psi) -y|^2/4t}|y|^{-\alpha}\,\d y\\
  &=c(4\pi t)^{-n/2}\int_{B(R)}\e^{-|\hat x-\psi y|^2/4\psi^2t}|y|^{-\alpha}\,\d y\\
  &= c\psi^\alpha(4\pi \psi^2 t)^{-n/2}\int_{B(\psi R)}\e^{-|\hat x-z|^2/4\psi^2t}|z|^{-\alpha}\,\d z\ge c\psi^\alpha w(\hat x,\psi^2t).
\end{align*}
Defining $M=M(R,\A ,n)>0$ by
\be
M=\inf\{ w(x,t)\ :\ |x|=R,\ 0\le t\le T\}
\label{eq:M}\ee
it follows that $w_{\Omega}(x,t)\ge cM\psi^\alpha$ for all $|x|=R\psi^{-1}$ and $t\in (0,\psi^{-2}T]$.
Furthermore, $w_{\Omega}(x,0)=|x|^{-\A}\ge R^{-\A}\psi^\alpha$ for all $|x|\le R\psi^{-1}$.
Consequently, by the  parabolic maximum principle,
\bs
w_{\Omega}(x,t)\ge \phi_*\psi^\alpha\qquad\mbox{for all}\quad |x|\le R\psi^{-1}\quad\mbox{and}\quad  0\le t\le \psi^{-2}T,
\es
where $\phi_*:=\min(cM,R^{-\A})>0$. With  $\sigma=\min(R\phi_*^{1/\alpha},T\phi_*^{2/\alpha} )>0$ and $\phi = \phi_*\psi^\alpha > \phi_*$, one therefore obtains
\[
w_{\Omega}(x,t)\ge\phi\qquad\mbox{for all}\quad  |x|\le \sigma\phi^{-1/\alpha}\quad\mbox{and}\quad   0\le t\le \sigma\phi^{-2/\alpha}.\qedhere
\]
\end{proof}

\section{Non-existence of local solutions}

In this section we prove the non-existence of local solutions, taking the following as our (essentially minimal) definition of such a solution. Note that the non-existence of a solution in the sense of Definition~\ref{def:soln} implies the non-existence of mild solutions and of classical solutions \cite[p.\ 77--78]{SQ}.

\begin{definition}{\cite[p.\ 78]{SQ}}
Given $f\ge 0$  and $u_0\ge0$ we say that $u$ is a {\rm{local integral solution}} of {\rm(P)} on $[0,T)$ if $u:\Om \times[0,T)\to[0,\infty]$ is measurable, finite almost everywhere, and satisfies
\be
u(t)=S_{\Om}(t)u_0+\int_0^t S_{\Om}(t-s)f(u(s))\, \d s\label{eq:mild}
\ee
 almost everywhere in $\Om \times[0,T)$.\label{def:soln}
\end{definition}

We now prove our main result, in which we obtain instantaneous blow-up in $L^1_{\rm loc}(\Om )$ for certain initial data in $L^q(\Om )$, $1\le q<\infty$, under the asymptotic growth condition (\ref{limsup}) when $f$ is non-decreasing.

\begin{theorem}\label{thm:main}
 Let $q\in [1,\infty )$. Suppose that $f:[0,\infty )\to [0,\infty )$ is non-decreasing. If
\begin{equation}\label{limsup}
\limsup_{s\to\infty}s^{-\gamma}f(s)=\infty
\end{equation}
for some $\gamma>q(1+\frac{2}{n})$, then there exists $u_0\in L^q(\Om )$ such that {\rm(P)} possesses no local integral solution. Indeed, any solution $u(t)$ that satisfies (\ref{eq:mild}) is not in $L^1_{\rm loc}(\Om )$ for any $t>0$.
\label{thm:nonexist}
\end{theorem}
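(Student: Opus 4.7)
The idea is to engineer a singular initial datum $u_0 \in L^q(\Omega)$, supported near the origin, so that (\ref{eq:mild}) forces $u(\cdot, t)$ to be infinite on a set of positive measure for every $t > 0$. Since the hypothesis $\gamma > q(1 + 2/n)$ is equivalent to $(n+2)/\gamma < n/q$, I would fix $\alpha$ with $(n+2)/\gamma < \alpha < n/q$, which also guarantees $\alpha \in (0,n)$, and take $u_0(x) = |x|^{-\alpha}\chi_R$ for $R$ small enough that $\overline{B(R)} \subset \Omega$. The constraint $\alpha q < n$ then gives $u_0 \in L^q(\Omega)$, so $u_0$ is an admissible initial datum.

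Given any $u$ satisfying (\ref{eq:mild}), discarding the non-negative first term and exploiting $u(s) \ge w_\Omega(s) := S_\Omega(s) u_0$ together with the monotonicity of $f$ yields
$$
u(x,t) \ge \int_0^t \int_\Omega K_\Omega(x,y,t-s)\, f(w_\Omega(y,s))\, \d y\, \d s.
$$
Next I would invoke Proposition~\ref{prop:ball} with the $s$-dependent choice $\phi = (\sigma/s)^{\alpha/2}$, which produces the pointwise bound $w_\Omega(y,s) \ge (\sigma/s)^{\alpha/2}$ on $|y| \le (\sigma s)^{1/2}$, and therefore $f(w_\Omega(y,s)) \ge f\bigl((\sigma/s)^{\alpha/2}\bigr)$ on that ball. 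For $|x|$ and $t$ small and $s$ restricted to $[0, t/2]$, Theorem~\ref{ourvdB} supplies $K_\Omega(x,y,t-s) \ge \B^n G_n(x,y,t-s)$, and on the relevant space-time region the Gaussian exponent $|x-y|^2/4(t-s)$ is uniformly bounded, so $G_n(x,y,t-s) \ge c\, t^{-n/2}$. Combining these and integrating $y$ over a ball of volume proportional to $(\sigma s)^{n/2}$ would give, for $|x| \le c\, t^{1/2}$ and $t$ small,
$$
u(x,t) \ge c\, t^{-n/2} \int_0^{t/2} s^{n/2}\, f\bigl((\sigma/s)^{\alpha/2}\bigr)\, \d s.
$$

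The key step is then the change of variables $\phi = (\sigma/s)^{\alpha/2}$, which recasts this lower bound, up to positive constants, as $t^{-n/2} \int_{(2\sigma/t)^{\alpha/2}}^\infty \phi^{-(n+2)/\alpha-1} f(\phi)\, \d\phi$. Hypothesis (\ref{limsup}) supplies a sequence $\phi_k \to \infty$ with $f(\phi_k) \ge k\phi_k^\gamma$; monotonicity of $f$ on each dyadic block $[\phi_k, 2\phi_k]$ then yields a contribution of order $k\, \phi_k^{\gamma - (n+2)/\alpha}$ to the tail, and since $\alpha > (n+2)/\gamma$ the exponent $\gamma - (n+2)/\alpha$ is positive, forcing the tail to diverge. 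Thus $u(x,t) = +\infty$ on $B(c\, t^{1/2})$ for every small $t > 0$, contradicting the almost-everywhere finiteness required by Definition~\ref{def:soln}. For larger $t > 0$ I would reach the same conclusion by retaining only the integration over a fixed small interval $s \in [0, s_*]$ and invoking positivity and continuity of $K_\Omega(\cdot, \cdot, t - s)$ on compact subsets of $\Omega$ to get a positive constant in place of the Gaussian bound.

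The main obstacle is the parameter balancing: the exponent $\alpha$ must simultaneously satisfy $\alpha < n/q$ (so that $u_0 \in L^q$) and $\alpha > (n+2)/\gamma$ (so that the $\phi$-integral diverges), and these are compatible precisely when $\gamma > q(1+2/n)$, exactly reproducing the hypothesis. A secondary subtlety is that (\ref{limsup}) provides only a thin sequence of lower bounds on $f$; monotonicity of $f$ is what promotes each such $\phi_k$ into a full dyadic block contributing to the divergent tail integral.
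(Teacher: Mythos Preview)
Your argument is correct and uses the same ingredients as the paper (the initial datum $u_0=|x|^{-\alpha}\chi_R$, Proposition~\ref{prop:ball}, the Gaussian lower bound of Theorem~\ref{ourvdB}, and monotonicity of $f$), but the execution is organised differently. The paper fixes $\alpha=(n+2)/\gamma$ exactly, integrates $u(t)$ over $B(R)$, and for each small $t$ applies Proposition~\ref{prop:ball} with a \emph{single} value $\phi_i$ from the sequence furnished by (\ref{limsup}); the resulting lower bound on $\|u(t)\|_{L^1(B(R))}$ is $c\,f(\phi_i)\phi_i^{-(n+2)/\alpha}=c\,f(\phi_i)\phi_i^{-\gamma}\to\infty$. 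You instead take $\alpha$ strictly larger than $(n+2)/\gamma$, keep $x$ fixed in a small ball, and feed an $s$-dependent level $\phi=(\sigma/s)^{\alpha/2}$ into Proposition~\ref{prop:ball}, turning the Duhamel time-integral into a divergent $\phi$-integral via the dyadic-block argument. Your route yields the slightly sharper conclusion that $u(\cdot,t)\equiv+\infty$ on a ball (pointwise blow-up rather than $L^1$ blow-up), at the cost of a change of variables and the extra dyadic step; the paper's route is computationally shorter because the sequence $\phi_i$ already does the work that your dyadic blocks do. Both arrive at the same contradiction with Definition~\ref{def:soln}, and your handling of large $t$ via a uniform positive lower bound on $K_\Omega$ over compact sets is equivalent to the paper's appeal to \cite[Theorem~4.1]{LRS1}. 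One small omission worth making explicit: your use of Proposition~\ref{prop:ball} requires $(\sigma/s)^{\alpha/2}>\phi_*$, i.e.\ $s<\sigma\phi_*^{-2/\alpha}$, which is automatic for $s\le t/2$ once $t$ is small enough.
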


\begin{proof}
We show that for small $t>0$, $u(t)\notin L^1_{\rm loc}(\Omega)$ and hence, arguing as in \cite[Theorem 4.1]{LRS1},  there can be no local integral solution of (P).

Choose $B(R)$ as in Proposition~\ref{prop:ball}. Set $\alpha=(n+2)/\gamma<n/q$, so that
$$
 \limsup_{s\to\infty}s^{-(n+2)/\alpha}f(s)=\infty.
 $$
 Then in particular there exists a sequence $\phi_i\to\infty$ such that
 $$
 f(\phi_i)\phi_i^{-(n+2)/\alpha}\to\infty\qquad\mbox{as}\quad i\to\infty.
 $$

Now take $u_0=|x|^{-\alpha}\chi_R\in L^q(\Om )$.  Defining $T$ as in the proof of Proposition~\ref{prop:ball}, fix $t<\min(T,1)$ and choose $i$ sufficiently large such that $\phi_i> \phi_*$,
 $\sigma\phi_i^{-1/\alpha}<R/2$ and $\sigma\phi_i^{-2/\alpha}\le t$.
 Clearly, by comparison, $u\ge w_{\Om}\ge 0$. Hence by monotonicity of $f$ and Theorem~\ref{ourvdB},
\begin{align*}
I &:=\int_{B(R)}u(t)\,\d x\ge  \int_{B(R)}\int_0^t[S_{\Om}(t-s)f(w_{\Om}(\cdot,s))](x)\,\d s\,\d x\\
&=\int_0^{t}\int_{B(R)}\int_{\Om} K_{\Om}(t-s,x,y)f(w_{\Om}(y,s))\,\d y\,\d x\,\d s\\
&\ge  c\int_0^{\sigma\phi_i^{-2/\alpha}}\int_{B(R)}\int_{B(\sigma\phi_i^{-1/\alpha})}G_n(t-s,x,y)f(\phi_i)\,\d y\,\d x\,\d s\\
&=  cf(\phi_i)\int_0^{\sigma\phi_i^{-2/\alpha}}\int_{B(\sigma\phi_i^{-1/\alpha})}(4\pi(t-s))^{-n/2}\int_{B(R)}\e^{-|x-y|^2/4(t-s)}\,\d x\,\d y\,\d s.
\end{align*}
Let $z=x-y$. Since  $|y|\le \sigma\phi_i^{-1/\alpha}<R/2$, it follows that
$$
\{z=x-y : x\in B(R)\}\supset B(R/2).
$$
Thus
\begin{align*}
I&\ge  cf(\phi_i)\int_0^{\sigma\phi_i^{-2/\alpha}}\int_{B\left( \sigma\phi_i^{-1/\alpha}\right)}(4\pi(t-s))^{-n/2}\int_{B(R/2)}\e^{-|z|^2/4(t-s)}\,\d z\,\d y\,\d s\\
&=  cf(\phi_i)\int_0^{\sigma\phi_i^{-2/\alpha}}\int_{B\left( \sigma\phi_i^{-1/\alpha}\right)}\int_{B\left( R/\sqrt{t-s}\right)}\e^{-|v|^2}\,\d v\,\d y\, \d s\quad \left( v=z/2\sqrt{t-s}\right)\\
&\ge
cf(\phi_i)\int_0^{\sigma\phi_i^{-2/\alpha}}\int_{B\left( \sigma\phi_i^{-1/\alpha}\right)}\int_{B(R)}\e^{-|v|^2}\,\d v\,\d y\,\d s\quad (\sqrt{t-s}\le 1)\\
&= cf(\phi_i)\int_0^{\sigma\phi_i^{-2/\alpha}}(\sigma\phi_i^{-1/\alpha})^n\,\d s \\
&= cf(\phi_i)\phi_i^{-(n+2)/\alpha}
\to  \infty \ {\rm as}\  i\to\infty.\qedhere
\end{align*}
\end{proof}

Note that this result also guarantees instantaneous blow-up of solutions of
$$
u_t=\Delta u+g(u)
$$
for any $g$ such that $g(s)\ge f(s)$, where $f$ satisfies the conditions of Theorem \ref{thm:main}, even if $g$ is not monotonic. In particular, for the canonical Fujita equation
\begin{equation}\label{u2p}
u_t=\Delta u +u^p,
\end{equation}
our argument shows the non-existence of local solutions when $p>q(1+\frac{2}{n})$. The sharp result in this case is known to be $p>1+\frac{2}{n}q$ \cite{WEISSLER1980,WEISSLER1986} with equality allowed if $q=1$ \cite{CZ}.

The existence of a finite limit in (\ref{limsup}) implies that $f(s)\le c(1+s^\gamma)$, and hence by comparison with (\ref{u2p}) is sufficient for the local existence of solutions provided that $\gamma<1+\frac{2}{n}q$ \cite{WEISSLER1979}. We currently, therefore, have an indeterminate range of $\gamma$,
 $$
 1+\frac{2}{n}q\le\gamma\le q(1+\frac{2}{n})
 $$
 for which we do not know whether (\ref{limsup}) characterises the existence or non-existence of local solutions.

\section{A very `bad' Osgood $f$}

To finish, using a variant of the construction in \cite{LRS1}, we provide an example of an $f$ that satisfies the Osgood condition (\ref{Osgood1}) but for which
\begin{equation}\label{bad}
\limsup_{s\to\infty}s^{-\gamma}f(s)=\infty,\qquad\mbox{for every}\quad \gamma\ge0.
\end{equation}

\begin{theorem}\label{cor:lip}
  There exists a locally Lipschitz function $f:[0,\infty)\to[0,\infty)$ such that $f(0)=0$, $f$ is non-decreasing, and $f$ satisfies the Osgood condition
  $$
  \int_1^\infty\frac{1}{f(s)}\,\d s=\infty,
  $$
  but nevertheless (\ref{bad}) holds. Consequently, for this $f$, for any $1\le q<\infty$ there exists a $u_0\in L^q(\Omega)$ such that {\rm(P)} has no local integral solution.
\end{theorem}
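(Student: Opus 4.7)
The plan is to construct $f$ by concatenating narrow "jump'' segments on which $f$ rises rapidly with very long "plateau'' segments on which $f$ is constant; once such an $f$ is in hand, the non-existence statement reduces immediately to Theorem~\ref{thm:main}, since (\ref{bad}) is strictly stronger than the hypothesis there for any $q\in[1,\infty)$.

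Concretely, I would define sequences $(s_k)$ and $(b_k)$ inductively by $s_1=b_1=2$ and
\[
b_{k+1}=s_k^{k+1},\qquad s_{k+1}=s_k+1+b_{k+1},
\]
and then let $f:[0,\infty)\to[0,\infty)$ be the continuous piecewise-linear function equal to $s$ on $[0,s_1]$, linearly interpolating between the values $b_k$ at $s_k$ and $b_{k+1}$ at $s_k+1$ on each jump interval $[s_k,s_k+1]$, and taking the constant value $b_{k+1}$ on each plateau $[s_k+1,s_{k+1}]$. It is immediate from this description that $f(0)=0$, that $f$ is non-decreasing (the levels $b_k$ are increasing, in fact $b_{k+1}\ge s_k\,b_k\ge 2b_k$), and that $f$ is locally Lipschitz, because every compact set meets only finitely many of the jump intervals even though their Lipschitz constants $b_{k+1}-b_k$ diverge.

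Next I would verify the two quantitative requirements. At the top of the $k$th jump, $f(s_k+1)=b_{k+1}=s_k^{k+1}$ while $s_k+1\le 2 s_k$, so for any fixed $\gamma\ge0$,
\[
(s_k+1)^{-\gamma} f(s_k+1)\;\ge\;2^{-\gamma}\,s_k^{\,k+1-\gamma}\;\longrightarrow\;\infty\qquad\text{as }k\to\infty,
\]
since $s_k\to\infty$ and $k+1>\gamma$ for all sufficiently large $k$; this gives (\ref{bad}). For the Osgood condition, the plateau $[s_k+1,s_{k+1}]$ has length $b_{k+1}$ and $f\equiv b_{k+1}$ there, so its contribution to $\int_1^\infty 1/f$ is exactly $b_{k+1}/b_{k+1}=1$; summing over $k$ produces divergence regardless of the (non-negative) contributions of the jump intervals.

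Finally, for each $q\in[1,\infty)$ I would pick any $\gamma>q(1+2/n)$ and observe that (\ref{bad}) certainly holds for this $\gamma$, so Theorem~\ref{thm:main} supplies a $u_0\in L^q(\Omega)$ for which (P) admits no local integral solution. The main obstacle throughout is the tension between monotonicity and arbitrarily fast super-polynomial growth: because $f$ may not decrease after a spike, every high value must persist on the subsequent plateau, and this threatens divergence of $\int 1/f$. The inductive choice $s_{k+1}-s_k-1=b_{k+1}$ is engineered precisely so that each plateau contributes a full unit to the Osgood integral, which is exactly what is needed to make the construction compatible with (\ref{Osgood1}).
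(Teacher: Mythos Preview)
Your proof is correct and follows essentially the same strategy as the paper: build a piecewise-linear, non-decreasing $f$ by alternating short ``jump'' intervals (to force $s^{-\gamma}f(s)\to\infty$ along a subsequence) with long plateaus (each contributing a fixed positive amount to the Osgood integral), then invoke Theorem~\ref{thm:main}. The only cosmetic difference is in the choice of growth rate---the paper uses the iterated exponential $\phi_{i+1}=\e^{\phi_i}$ with plateaus $[\phi_{i-1},\phi_i/2]$ of height $\phi_i-\phi_{i-1}$, whereas you use $b_{k+1}=s_k^{k+1}$ and tune the plateau length to equal its height exactly---but the verification of the Osgood condition and of (\ref{bad}) proceeds identically in both cases.
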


\begin{proof}
Fix $\phi_0=1$ and define inductively the sequence $\phi_i$  via
$$
\phi_{i+1}=\e^{\phi_i}.
$$
Clearly, $\phi_i\to\infty$ as $i\to\infty$. Now define $f:[0,\infty )\to [0,\infty )$ by
\ba
f(s)=\left\{
\begin{array}{ll}
(\e-1)s,& s\in J_0:=[0,1],\\
\phi_{i}-{\phi_{i-1}},& s\in I_i:=[\phi_{i-1},{\phi_{i}}/2],\ i\ge 1,\\
\ell_i(s),&s\in J_i:=(\phi_i/2,\phi_i),\ i\ge 1,
\end{array}\right.
\label{eq:f}
\ea
where $\ell_i$ interpolates linearly between the values of $f$ at $\phi_{i}/2$ and $\phi_i$. By construction $f(0)=0$, $f$ is non-decreasing, and $f$ is Osgood since
$$
\int_1^\infty \frac{1}{f(s)}\,\d s\ge\sum_{i=1}^\infty\int_{I_i}\frac{1}{f(s)}\,\d s= \sum_{i=1}^\infty\frac{\phi_i/2-\phi_{i-1}}{\phi_i-\phi_{i-1}}=+\infty.
$$
However, $f(\phi_i)=\e^{\phi_i}-\phi_i$, and so for any $\gamma\ge0$
$$
\lim_{i\to\infty}\phi_i^{-\gamma}f(\phi_i)\to\infty\qquad\mbox{as}\quad i\to\infty,
$$
which shows that (\ref{bad}) holds.\end{proof}

This example shows that there exist semilinear heat equations that are globally well-posed in $L^{\infty}(\Om )$, yet ill-posed in every $L^{q}(\Om )$
for $1\le q<\infty$.

\section{Appendix: Gaussian lower bound on the heat kernel for all $t>0$}

For the sake of completeness we now follow \cite{vdBerg90} (see also \cite{vdBerg92}) and use the result of Lemma \ref{1dvdB} to obtain a lower bound on\footnote{Recall that we use the notation $K_a(x,y;t)$ for the one-dimensional heat kernel on $(-a,a)$.}  $K_a(x,y;t)$ in terms of $K_\epsilon(0,0;t)$. We then bound $K_\epsilon(0,0;t)$ below by supplementing the bound from Lemma \ref{1dvdB} with information from the eigenfunction expansion of the kernel. This will allow us a simple proof of a similar form of lower bound on a general domain $\Omega$ when $[x,y]\subset\Omega$.

The main idea in the proof is to use repeatedly the semigroup property of the heat semigroup in the form
$$
K_a(x,y;t)=\int_{(-a,a)}K_a(x,z;t)K_a(z,y;t)\,\d z.
$$

\begin{proposition}\label{vdbprop}
 The one-dimensional heat kernel on $\Omega=(-a,a)$ satisfies
  \begin{equation}\label{KbdK}
  K_a(x,y;t)\ge \e^{-|x-y|^2/4t}K_\epsilon(0,0,t)
  \end{equation}
  for all $x,y\in(-a,a)$ and $t>0$, where $\epsilon=\dist([x,y],\partial\Omega)$.
\end{proposition}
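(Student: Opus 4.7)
The plan is to combine monotonicity of the Dirichlet kernel in the domain with iterated Chapman--Kolmogorov, localising each factor via the short-time bound of Lemma~\ref{1dvdB} to a thin tube around the segment $[x,y]$. After translating to coordinates centred on the straight line from $x$ to $y$, the resulting Gaussian product factorises into a deterministic weight $\e^{-|x-y|^2/4t}$ and a restricted Gaussian chain which, in the fine-mesh limit, reproduces the on-diagonal value $K_\epsilon(0,0;t)$.

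\textbf{Reduction to a symmetric interval.} By translation and the monotonicity $K_{\Omega'}\le K_{\Omega}$ for $\Omega'\subset\Omega$, and since by the definition of $\epsilon$ the $\epsilon$-collar $[\min(x,y)-\epsilon,\max(x,y)+\epsilon]$ lies in $(-a,a)$, I may assume the domain is the symmetric interval $(-b,b)$ with $b=c+\epsilon$, $c=|x-y|/2$, with $x$ and $y$ moved to $-c$ and $c$. It then suffices to prove $K_b(-c,c;t)\ge \e^{-c^2/t}K_\epsilon(0,0;t)$.

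\textbf{Chapman--Kolmogorov with tube restriction.} Fix $\eta\in(0,\epsilon)$ and an integer $N$ with $t/N\le\eta^2$. Iterating the semigroup identity,
\begin{equation*}
K_b(-c,c;t)=\int_{(-b,b)^{N-1}}\prod_{i=1}^N K_b(\xi_{i-1},\xi_i;t/N)\,\d\xi_1\cdots \d\xi_{N-1},
\end{equation*}
with $\xi_0=-c$, $\xi_N=c$. Restrict the integration to the tube $T_\eta=\{|\xi_i-y_i^*|<\epsilon-\eta\}$ around the interpolation nodes $y_i^*:=-c+2ci/N$. On $T_\eta$ every segment $[\xi_{i-1},\xi_i]$ lies at distance at least $\eta$ from $\{-b,b\}$, and Lemma~\ref{1dvdB} (with $\eta$ in place of $\epsilon$) gives
\begin{equation*}
K_b(\xi_{i-1},\xi_i;t/N)\ge G_1(\xi_{i-1},\xi_i;t/N)\bigl(1-2\e^{-N\eta^2/t}\bigr).
\end{equation*}

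\textbf{Extracting $\e^{-c^2/t}$ and passing to the limit.} Setting $\tilde\xi_i=\xi_i-y_i^*$ (so $\tilde\xi_0=\tilde\xi_N=0$), the telescoping identity $\sum_{i=1}^N(\xi_i-\xi_{i-1})^2=\tfrac{4c^2}{N}+\sum_{i=1}^N(\tilde\xi_i-\tilde\xi_{i-1})^2$ gives
\[\prod_{i=1}^N G_1(\xi_{i-1},\xi_i;t/N)=\e^{-c^2/t}\prod_{i=1}^N G_1(\tilde\xi_{i-1},\tilde\xi_i;t/N),\]
and hence
\begin{equation*}
K_b(-c,c;t)\ge\bigl(1-2\e^{-N\eta^2/t}\bigr)^N\e^{-c^2/t}\int_{|\tilde\xi_i|<\epsilon-\eta}\prod_{i=1}^N G_1(\tilde\xi_{i-1},\tilde\xi_i;t/N)\,\d\tilde\xi_1\cdots \d\tilde\xi_{N-1}.
\end{equation*}
For every $N$ the remaining chain integral dominates $K_{\epsilon-\eta}(0,0;t)$, since asking the path to lie in $(-\epsilon+\eta,\epsilon-\eta)$ at every time $s\in[0,t]$ is strictly more restrictive than at the $N-1$ mesh points. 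Sending $N\to\infty$ (so $(1-2\e^{-N\eta^2/t})^N\to 1$) and then $\eta\to 0^+$, and using continuity of $K_r(0,0;t)$ in $r$, completes the proof.

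\textbf{Main obstacle.} The only non-routine ingredient is the lower bound on the restricted Gaussian chain integral by $K_{\epsilon-\eta}(0,0;t)$. This is precisely the statement that discrete-mesh confinement is weaker than continuous-time confinement, and may be justified either probabilistically via the Brownian-bridge representation of the Dirichlet kernel, or analytically as a Trotter-type product formula for the Dirichlet heat semigroup on $(-\epsilon+\eta,\epsilon-\eta)$; controlling the $N\to\infty$ limit of the $\beta$-type prefactor is then the remaining bookkeeping point.
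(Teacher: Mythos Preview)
Your argument is essentially the paper's: iterate Chapman--Kolmogorov, restrict to a tube about $[x,y]$, apply Lemma~\ref{1dvdB} on each short-time factor, and use the telescoping identity to split off $\e^{-|x-y|^2/4t}$. The only substantive difference is the step you label the ``main obstacle'', where you are working harder than you need to.

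The bound you want,
\[
\int_{(-\epsilon+\eta,\epsilon-\eta)^{N-1}}\prod_{i=1}^N G_1(\tilde\xi_{i-1},\tilde\xi_i;t/N)\,\d\tilde\xi_1\cdots\d\tilde\xi_{N-1}\ \ge\ K_{\epsilon-\eta}(0,0;t),
\]
does not require a Brownian-bridge representation or a Trotter product formula. It follows in one line from two ingredients already in play: domain monotonicity gives $G_1\ge K_{\epsilon-\eta}$ on each factor, and then the Chapman--Kolmogorov identity for $K_{\epsilon-\eta}$ over its own domain $(-\epsilon+\eta,\epsilon-\eta)$ collapses the resulting chain \emph{exactly} to $K_{\epsilon-\eta}(0,0;t)$, for every $N$. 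This is precisely how the paper closes the argument (working directly with a tube of radius $\epsilon$ and the bound $G_1\ge K_\epsilon$), so that after this step only the limit $[1-2\e^{-m\epsilon^2/t}]^m\to 1$ as $m\to\infty$ remains; no appeal to continuous-time path confinement or to continuity of $r\mapsto K_r(0,0;t)$ is needed.
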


\begin{proof}
Take $x,y\in(-a,a)$, $t>0$, and $m\in\N$ with $m$ sufficiently large that $1-2\e^{-m\epsilon^2/t}>0$. For $j=0,1,\ldots,m$ set $x_j=x+jz$, where $z=(y-x)/m$. Then using the semigroup property
\begin{align*}
&K_a(x,y;t)\\
&\ =\int_{\Omega^{m-1}} K_a(x,y_1;t/m)\left\{\prod_{j=1}^{m-2}K_a(y_j,y_{j+1};t/m)\right\}K_a(y_{m-1},y;t/m)\,\d^{m-1}y,
\end{align*}
writing $\d^{m-1}y$ for $\d y_1\cdots\,\d y_{m-1}$.

Now note that  $B(x_j,\epsilon)\subset\Omega$ for every $j=0,\ldots,m$, and so
$$
K_a(x,y;t)
\ge\int_{B(\epsilon)^{m-1}}\prod_{j=0}^{m-1}K_a(x_j+w_j,x_{j+1}+w_{j+1};t/m)
  \,\d^{m-1}w,
$$
setting $w_0=w_m=0$ and $w_j=y_j-x_j$ for $j=1,\ldots,m-1$. Using Lemma \ref{1dvdB} we obtain the lower bound
\begin{align*}
K_a&\ge C_{m,t}\int_{B(\epsilon)^{m-1}}\prod_{j=0}^{m-1}G_1(x_j+w_j,x_{j+1}+w_{j+1};t/m)\,\d^{m-1}w\\
&=C_{m,t}\int_{B(\epsilon)^{m-1}}(4\pi t/m)^{-m/2}\exp\left(-\frac{m\sum_{j=0}^{m-1}|z+w_{j+1}-w_j|^2}{4t}\right)\,\d^{m-1}w,
\end{align*}
where $C_{m,t}=[1-2\e^{-m\epsilon^2/t}]^m$.

Elementary algebra gives
$$
m\sum_{j=0}^{m-1}|z+w_{j+1}-w_j|^2
=|x-y|^2+m\sum_{j=0}^{m-1}|w_{j+1}-w_j|^2,
$$
and therefore
\begin{align*}
K_a
&\ge C_{m,t}\e^{-|x-y|^2/4t}\int_{B(\epsilon)^{m-1}}(4\pi t/m)^{-m/2}\exp\left(-\frac{m\sum_{j=0}^{m-1}|w_{j+1}-w_j|^2}{4t}\right)\,\d^{m-1}w\\
&=C_{m,t}\e^{-|x-y|^2/4t}\int_{B(\epsilon)^{m-1}}\prod_{j=0}^{m-1}G_1(w_j,w_{j+1};t/m)\,\d^{m-1}w.
\end{align*}
Now we can use monotonicity of the heat kernel, $G_1\ge K_\epsilon$, to obtain
\begin{align*}
K_a(x,y;t)&\ge C_{m,t}\e^{-|x-y|^2/4t}\int_{B(\epsilon)^{m-1}}\prod_{j=0}^{m-1}K_\epsilon(w_j,w_{j+1};t/m)\,\d^{m-1}w\\
&=C_{m,t}\e^{-|x-y|^2/4t}K_\epsilon(0,0,t),
\end{align*}
using the semigroup property of $K_\epsilon$ and recalling that $w_0=w_m=0$. Finally, noting that $C_{m,t}\to1$ as $m\to\infty$, we obtain (\ref{KbdK}).
\end{proof}

We now obtain a lower bound on $K_a(0,0;t)$ using the eigenfunction expansion of the kernel.

\begin{lemma}\label{allt}
For all $t>0$
\begin{equation}\label{lowerat}
K_a(0,0;t)\ge\frac{1}{\sqrt{4\pi t}}\e^{-\pi^2t/4a^2}.
\end{equation}
\end{lemma}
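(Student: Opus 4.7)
The plan is to bound $K_a(0,0;t)$ below by two different quantities, using each where it is sharper, and glueing the two regimes together at the critical time $t=a^2/(4\pi)$.

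The first ingredient is the eigenfunction expansion of the Dirichlet Laplacian on $(-a,a)$: the normalised eigenfunctions are $\phi_k(x)=a^{-1/2}\sin(k\pi(x+a)/2a)$ with eigenvalues $(k\pi/2a)^2$, and since $\phi_k(0)$ vanishes for even $k$ and equals $\pm a^{-1/2}$ for odd $k$, separation of variables yields
\[
K_a(0,0;t) = \frac{1}{a}\sum_{\ell=0}^{\infty}\e^{-(2\ell+1)^2\pi^2 t/4a^2} \ge \frac{1}{a}\e^{-\pi^2 t/4a^2}.
\]
When $t\ge a^2/(4\pi)$ we have $1/a \ge 1/\sqrt{4\pi t}$ and (\ref{lowerat}) follows at once.

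For $t<a^2/(4\pi)$ I would instead apply Lemma \ref{1dvdB} at $x=y=0$ (so $\epsilon=a$) in its sharper form
\[
K_a(0,0;t) \ge \frac{1}{\sqrt{4\pi t}}\bigl[1 - 2\e^{-a^2/t}\bigr],
\]
which reduces (\ref{lowerat}) to the pointwise inequality $1-2\e^{-a^2/t}\ge \e^{-\pi^2 t/4a^2}$ on $(0,a^2/(4\pi)]$. Writing $u=a^2/t\ge 4\pi$, this is the elementary calculus inequality $(1-2\e^{-u})\e^{\pi^2/4u}\ge 1$.

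This last step is the main obstacle, but is purely a computation: the left-hand side tends to $1$ as $u\to\infty$, so it suffices to show that it is monotone decreasing on $[4\pi,\infty)$; on differentiating, this reduces to the bound $8u^2\e^{-u} < \pi^2(1-2\e^{-u})$ on that interval. The left side peaks at $u=2$ with value $32/\e^2 < 5$ and decreases thereafter, while the right side exceeds $\pi^2(1-2\e^{-4\pi})>9$ throughout our range, so the inequality holds with considerable slack.
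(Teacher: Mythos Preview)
Your proof is correct and follows essentially the same route as the paper: split at $t=a^2/(4\pi)$, use the eigenfunction expansion (keeping only the $\ell=0$ term) for $t$ large, and use Lemma~\ref{1dvdB} for $t$ small, reducing the latter case to the elementary inequality $1-2\e^{-a^2/t}\ge \e^{-\pi^2 t/4a^2}$. The only difference is in how that last inequality is verified: the paper substitutes $s=t/a^2$ and uses the crude bounds $\e^{-1/s}<s/4$ and $\e^{-s}<1-s/2$ on $(0,1/3)$, whereas you set $u=a^2/t$ and argue by monotonicity of $(1-2\e^{-u})\e^{\pi^2/4u}$; both are straightforward calculus and equally effective.
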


\begin{proof}
The eigenfunctions of $u_{xx}=\lambda u$ with $u(0)=u(2a)=0$ are $\sin k\pi x/2a$ with corresponding eigenvalues $-k^2\pi^2/4a^2$: the kernel is therefore
$$
K_{(0,2a)}(x,y;t)=\frac{1}{a}\sum_{k=1}^\infty \e^{-k^2\pi^2t/4a^2}\sin (k\pi x/2a)\sin (k\pi y/2a).
$$
Since $K_a(0,0;t)=K_{(0,2a)}(a,a;t)$ we obtain
\begin{align*}
K_a(0,0;t)&=\frac{1}{a}\sum_{k=1}^\infty \e^{-k^2\pi^2t/4a^2}\sin^2 (k\pi/2)\\
&=\frac{1}{a}\sum_{k=0}^\infty \e^{-(2k+1)^2\pi^2t/4a^2}\ge\frac{1}{a}\e^{-\pi^2t/4a^2},
\end{align*}
from which (\ref{lowerat}) follows for $a\le(4\pi t)^{1/2}$. For $t\le a^2/4\pi$, we use Lemma \ref{1dvdB} to give
$$
K_a(0,0;t)\ge 1-2\e^{-\epsilon^2/t};
$$
now simply observe that $\e^{-1/s} < s/4$ and $\e^{-s} < 1 - (s/2)$ for $0 < s < 1/3$,
and so certainly $1 - 2\e^{-1/s}\ge \e^{-s}$ for $0 < s \le 1/(4\pi)< 1/3$, and thus the bound in (\ref{lowerat}) holds for all $t>0$ as claimed.\end{proof}

%
%
%
%

Combining the results of Proposition \ref{vdbprop} and Lemma \ref{allt} finally yields the required lower bound in one dimension.

\begin{corollary}\label{good1d}
If $\Omega=(-a,a)$, $[x,y]\subset\Omega$, and $\epsilon=\dist([x,y],\partial\Omega)$ then
$$
K_a(x,y;t)\ge \e^{-\pi^2t/4\epsilon^2}G_1(x,y;t)\qquad\mbox{for all}\quad t>0.
$$
\end{corollary}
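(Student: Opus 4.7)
The plan is to simply combine the two preceding results. Proposition \ref{vdbprop} already gives
$$
K_a(x,y;t)\ge \e^{-|x-y|^2/4t}\,K_\epsilon(0,0;t)
$$
for every $t>0$, where $\epsilon=\dist([x,y],\partial\Omega)$, so the only remaining task is to insert a suitable lower bound on $K_\epsilon(0,0;t)$.

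To do this, I would apply Lemma \ref{allt} with the parameter $a$ replaced by $\epsilon$, which yields
$$
K_\epsilon(0,0;t)\ge \frac{1}{\sqrt{4\pi t}}\,\e^{-\pi^2 t/4\epsilon^2}\qquad\text{for all }t>0.
$$
Substituting this into the inequality from Proposition \ref{vdbprop} gives
$$
K_a(x,y;t)\ge \frac{1}{\sqrt{4\pi t}}\,\e^{-|x-y|^2/4t}\,\e^{-\pi^2 t/4\epsilon^2} = \e^{-\pi^2 t/4\epsilon^2}\,G_1(x,y;t),
$$
which is precisely the claimed bound.

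Since both ingredients are already proved and the combination is a one-line manipulation, I do not anticipate any real obstacle; the only point to verify carefully is that Lemma \ref{allt}, stated for $K_a(0,0;t)$ on $(-a,a)$, applies verbatim to $K_\epsilon(0,0;t)$ on $(-\epsilon,\epsilon)$, which it does because its proof never used any relationship between $a$ and the location of the points under consideration.
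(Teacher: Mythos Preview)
Your proposal is correct and matches the paper's own approach exactly: the paper simply states that combining Proposition~\ref{vdbprop} with Lemma~\ref{allt} (applied with $a=\epsilon$) yields the corollary, which is precisely the one-line substitution you carry out.
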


For $\Omega\subset\R^n$ the result follows using the argument in the proof of Theorem \ref{ourvdB}, in particular the inequality
$$
K_\Omega(x,y,t)\ge K_\Pi(x,y,t)\ge K_{\frac{1}{2}|x-y|+\epsilon/\sqrt n}(\tilde x,\tilde y,t)[K_{\epsilon/\sqrt n}(0,0,t)]^{n-1}.
$$

\begin{corollary}\label{goodnd}
If $\Omega\subset\R^n$, $[x,y]\subset\Omega$, and $\epsilon=\dist([x,y],\partial\Omega)$ then
$$
K_a(x,y;t)\ge \e^{-n^2\pi^2t/4\epsilon^2}G_n(x,y;t)\qquad\mbox{for all}\quad t>0.
$$
\end{corollary}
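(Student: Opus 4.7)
The plan is to mirror the proof of Theorem~\ref{ourvdB}, but replace the rough one-dimensional estimate from Lemma~\ref{1dvdB} by the sharper bounds valid for all $t>0$: Corollary~\ref{good1d} for the off-diagonal factor along $[x,y]$ and Lemma~\ref{allt} for each on-diagonal transverse factor. The geometric setup is unchanged: by the definition of $\epsilon$ the segment $[x,y]$ sits inside a parallelepiped $\Pi\subset\bar\Omega$ with one axis of length $|x-y|+2\epsilon/\sqrt n$ in the direction of $y-x$ and $n-1$ transverse axes of length $2\epsilon/\sqrt n$, and in particular $x$ and $y$ lie at distance at least $\epsilon/\sqrt n$ from every face of $\Pi$.

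First I would invoke monotonicity of the Dirichlet heat kernel with respect to the domain, $K_\Omega(x,y;t)\ge K_\Pi(x,y;t)$, and separation of variables in coordinates aligned with the edges of $\Pi$ to factor
$$
K_\Pi(x,y;t) = K_{\frac{1}{2}|x-y|+\epsilon/\sqrt n}(\tilde x,\tilde y;t)\,\bigl[K_{\epsilon/\sqrt n}(0,0;t)\bigr]^{n-1},
$$
exactly as in the proof of Theorem~\ref{ourvdB}, where $\tilde x,\tilde y$ are the coordinates of $x,y$ along the $[x,y]$ axis. Next I would bound the longitudinal (off-diagonal) factor from below using Corollary~\ref{good1d}: since the distance from $[\tilde x,\tilde y]$ to the endpoints of the interval equals $\epsilon/\sqrt n$, this produces an exponential factor $\e^{-\pi^2 t/4(\epsilon/\sqrt n)^2}=\e^{-n\pi^2 t/4\epsilon^2}$ multiplying $G_1(\tilde x,\tilde y;t)$. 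For each of the $n-1$ transverse on-diagonal factors I would apply Lemma~\ref{allt} with $a=\epsilon/\sqrt n$, yielding $(4\pi t)^{-1/2}\e^{-n\pi^2 t/4\epsilon^2}$ per factor.

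Multiplying everything together, the one-dimensional Gaussian prefactors collapse into $(4\pi t)^{-n/2}\e^{-|x-y|^2/4t}=G_n(x,y;t)$, while the exponential losses accumulate as $\e^{-n\pi^2 t/4\epsilon^2}$ from the longitudinal factor together with $\e^{-(n-1)n\pi^2 t/4\epsilon^2}$ from the $n-1$ transverse factors, giving the total exponent $-n^2\pi^2 t/4\epsilon^2$ claimed in the corollary. There is no substantive obstacle; the argument is just bookkeeping on top of the one-dimensional results already established. The only point worth a second glance is the source of the $n^2$ in the exponent (rather than $n$): each transverse slot has width $2\epsilon/\sqrt n$, contributing $n\pi^2/\epsilon^2$ into the exponent, and there are $n$ such independent exponents to accumulate.
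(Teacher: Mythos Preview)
Your proposal is correct and follows exactly the route the paper indicates: reduce to the parallelepiped $\Pi$ by domain monotonicity, factor $K_\Pi$ as a product of one-dimensional kernels, and then apply the one-dimensional bounds valid for all $t>0$ (Corollary~\ref{good1d} on the longitudinal factor, Lemma~\ref{allt}---or equivalently Corollary~\ref{good1d} at $x=y=0$---on each transverse factor). The paper merely records the key inequality $K_\Omega\ge K_{\frac{1}{2}|x-y|+\epsilon/\sqrt n}(\tilde x,\tilde y;t)[K_{\epsilon/\sqrt n}(0,0;t)]^{n-1}$ and leaves the bookkeeping you have written out to the reader.
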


We note that the argument in \cite{vdBerg90} does not require the line segment $[x,y]$ to be contained in $\Omega$, leading to a lower bound that depends on the curvature of the geodesic joining $x$ and $y$.


\vspace{0.5cm}
\noindent{\bf Acknowledgements}
The authors acknowledge support under the following grants: EPSRC Leadership Fellowship  EP/G007470/1 (JCR);  EPSRC Doctoral Prize EP/P50578X/1 (MS).





%

\end{document}